\documentclass[reqno]{amsart}

\usepackage{amssymb,amsfonts,amstext,amsthm,hyperref,cleveref,xcolor}

\setlength\parindent{24pt}
\usepackage{graphics}
\usepackage{graphicx}
\usepackage{epstopdf}
\usepackage{indentfirst}
\usepackage{float}
\usepackage[numbers,sort&compress]{natbib}
\bibpunct[, ]{[}{]}{,}{n}{,}{,}
\makeatletter
\def\NAT@def@citea{\def\@citea{\NAT@separator}}
\makeatother
\theoremstyle{plain}
\newtheorem{theorem}{Theorem}[section]
\newtheorem{lemma}[theorem]{Lemma}

\newtheorem{claim}[theorem]{Claim}

\crefrangeformat{equation}{#3(#1)#4--#5(#2)#6}
\crefname{enumi}{\unskip}{\unskip}

\theoremstyle{definition}

\newtheorem{remark}[theorem]{Remark}

\begin{document}

\title[Coninvolutions over Gaussian Integers and Quaternions integers modulo $p$]{Coninvolutions on Upper Triangular Matrix Group over the Ring of Gaussian Integers and Quaternions integers modulo $p$
}

\author{Ivan Gargate}
\address{UTFPR, Campus Pato Branco, Rua Via do Conhecimento km 01, 85503-390 Pato Branco, PR, Brazil}
\email{ivangargate@utfpr.edu.br}

\author{Michael Gargate}
\address{UTFPR, Campus Pato Branco, Rua Via do Conhecimento km 01, 85503-390 Pato Branco, PR, Brazil}
\email{michaelgargate@utfpr.edu.br}
\begin{abstract}
In this article we give various formulates for compute the number of all coninvolutions over the group of upper triangular matrix with entries into the ring of Gaussian integers module $p$ and the ring of Quaternions integers module $p$, with $p$ an odd prime number. 
\end{abstract}
\keywords{Coninvolutions, Gaussian integers, Quaternions integers, upper triangular matrices }
\maketitle
\section{Introduction}\label{intro}
Let $R$ be an finite ring endowed with a compatible complex conjugate structure. Denote by $T_n(R)$ the $n\times n$ upper triangular matrix group with entries in $R$. A matrix $A\in T_n(R)$ is called an Involution iff $A^2=I_n$ with $I_n$ the identity in $T_n(R)$. Involutions on $T_n(R)$ has been studied by several authors, see for instance \cite{Slowik1, Hou} also over Incidence Algebras see \cite{Ivan}. On the other hand, Coninvolution matrices has been studied by \cite{Ikramov} and \cite{Horn}. In the present article we study how compute the number of Coninvolution matrices over  two specially finite rings, the ring of Gaussian Integers module $p$ and the ring of Quaternions Integers module $p$, this is  motivated by the work initiated by Slowik in \cite{Slowik1}.  


An matrix $A=(a_{rs})\in T_n(R)$ is called a Coninvolution iff $A\bar{A}=I_n$, where $\bar{A}=(\bar{a}_{rs})$ and $\bar{a}$ denote the complex conjugate of the number $a$. Denote by $\mathcal{CI}(n,R)$ the number of all coninvolution matrices there are in $T_n(R)$. 

In the section 2 we show how construct a Coninvolution matrix and with this algorithm we compute the number of all Coninvonlutions with $R=\mathbb{Z}_p[i]$, the ring of Gaussian Integers module $p$, i.e, we calculate the number  $\mathcal{CI}(n,\mathbb{Z}_p[i])$.

In the section 3, similarly, we show how construct a Coninvolution matrix but in the block upper triangular matrix group $T_{n}(M_s(\mathbb{Z}_p)$ that is isomorphic to the group $T_n(\mathbb{Z}_p[i,j,k])$ whose entries are in the Ring of Quaternion Integers module $p$. With this isomorphism its possible describe all Coninvolutions in $T_n(\mathbb{Z}_p[i,j,k])$. With this result, we finally compute the number of all Coninvolutions $\mathcal{CI}(n,\mathbb{Z}_p[i,j,k])$. In both cases we assume that $p$ is an odd prime number.

In the section 4 we present various Tables of the numbers of coninvolutions in $T_n(\mathbb{Z}_p[i])$ and $T_n(\mathbb{Z}_p[i,j,k])$.

Our main results are the followings:
\begin{theorem}\label{teo2}
Let $\mathbb{Z}_p[i]$ the Gaussian integers module $p$ . Then the number of Coninvoltions in the group $T_n(\mathbb{Z}_p[i])$ is equal to
$$ \mathcal{CI}(n,\mathbb{Z}_p[i])=|U(\mathbb{Z}_p[i])|^n\times p^{\frac{(n-1)n}{2}},$$
where $|U(\mathbb{Z}_p[i])|$ denote the cardinality of $U(\mathbb{Z}_p[i])=\{z \in \mathbb{Z}_p[i], \  |z|^2=1\}.$
\end{theorem}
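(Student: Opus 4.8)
The plan is to induct on $n$, peeling off the last row and column of $A$. For $n=1$ a matrix $A=(a_{11})$ satisfies $A\bar A=I_1$ iff $|a_{11}|^2=1$, so $\mathcal{CI}(1,\mathbb{Z}_p[i])=|U(\mathbb{Z}_p[i])|$, in agreement with the claimed formula. For $n\ge 2$ write
$$A=\begin{pmatrix} A' & v\\ 0 & d\end{pmatrix},\qquad A'\in T_{n-1}(\mathbb{Z}_p[i]),\ v\in\mathbb{Z}_p[i]^{\,n-1},\ d\in\mathbb{Z}_p[i].$$
Multiplying blocks, $A\bar A=I_n$ is equivalent to the three conditions $A'\bar{A'}=I_{n-1}$, $\ d\bar d=1$, and $\ A'\bar v+\bar d\,v=0$. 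Hence every $n\times n$ coninvolution arises by choosing a coninvolution $A'$ (there are $\mathcal{CI}(n-1,\mathbb{Z}_p[i])$ of them), a unit $d\in U(\mathbb{Z}_p[i])$, and a solution $v$ of the last equation. The whole proof then reduces to the following counting lemma: \emph{for every coninvolution $A'\in T_{n-1}(\mathbb{Z}_p[i])$ and every $d\in U(\mathbb{Z}_p[i])$, the equation $A'\bar v+\bar d\,v=0$ has exactly $p^{\,n-1}$ solutions $v\in\mathbb{Z}_p[i]^{\,n-1}$.} Granting this, $\mathcal{CI}(n,\mathbb{Z}_p[i])=|U(\mathbb{Z}_p[i])|\cdot p^{\,n-1}\cdot\mathcal{CI}(n-1,\mathbb{Z}_p[i])$, and iterating gives $|U(\mathbb{Z}_p[i])|^{\,n}\,p^{\,0+1+\cdots+(n-1)}=|U(\mathbb{Z}_p[i])|^{\,n}p^{(n-1)n/2}$.

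To prove the lemma, view $\Phi(v):=A'\bar v+\bar d\,v$ as a $\mathbb{Z}_p$-linear endomorphism of $\mathbb{Z}_p[i]^{\,n-1}$, which as a $\mathbb{Z}_p$-space has dimension $2(n-1)$; linearity over $\mathbb{Z}_p$ holds since conjugation fixes $\mathbb{Z}_p$. Scaling by $d$ replaces $\Phi$ by $\Psi(v):=v+B\bar v$ with $B:=dA'$, without changing the kernel or the dimension of the image; note $B\bar B=|d|^2 A'\bar{A'}=I_{n-1}$. Using $B\bar B=I_{n-1}$ and (for the image) the fact that $\tfrac12\in\mathbb{Z}_p$ because $p$ is odd, one checks directly that
$$\ker\Psi=V_-(B):=\{w:\ B\bar w=-w\},\qquad \operatorname{Im}\Psi=V_+(B):=\{w:\ B\bar w=w\}.$$
Finally, multiplication by $i$ is a $\mathbb{Z}_p$-linear automorphism of $\mathbb{Z}_p[i]^{\,n-1}$ that carries $V_+(B)$ into $V_-(B)$ and back (because $\overline{iw}=-i\bar w$), so $\dim_{\mathbb{Z}_p}V_+(B)=\dim_{\mathbb{Z}_p}V_-(B)$; together with the rank–nullity identity $\dim\ker\Psi+\dim\operatorname{Im}\Psi=2(n-1)$ this forces $\dim\ker\Psi=n-1$, i.e.\ $\#\ker\Phi=p^{\,n-1}$, as needed.

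The only genuinely delicate point is this last dimension count: that the two ``con-eigenspaces'' $V_{\pm}(B)$ are each exactly half-dimensional. The argument above handles it through the two observations that $\ker\Psi$ and $\operatorname{Im}\Psi$ are precisely $V_-(B)$ and $V_+(B)$, and that multiplication by $i$ interchanges these spaces; the oddness of $p$ enters exactly once, in identifying $\operatorname{Im}\Psi$ with $V_+(B)$. Everything else — the block multiplication and the solution of the recursion — is routine bookkeeping. (A more computational alternative is to fill in the strictly upper triangular entries of $A$ in order of increasing distance $s-r$ from the diagonal, checking that the resulting equation for $a_{rs}$ is always consistent and has exactly $p$ solutions; but this requires a secondary induction on $s-r$ and is messier than the block argument.)
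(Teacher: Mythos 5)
Your proof is correct, but it takes a genuinely different route from the paper's. The paper first proves a structural result (its Theorem \ref{teo1}) that parametrizes every coninvolution entry by entry: for each off-diagonal position $(r,s)$ it splits into six cases according to the relation between $z_{rr}$ and $z_{ss}$ (equal, negatives, conjugates, etc.), solves the resulting $2\times 2$ linear system over $\mathbb{Z}_p$ explicitly in each case, and observes that the solution set is always a line, i.e.\ one free parameter $\gamma\in\mathbb{Z}_p$ per entry; the count $|U(\mathbb{Z}_p[i])|^n\, p^{n(n-1)/2}$ then falls out by multiplying these local degrees of freedom. You instead peel off the last column and reduce everything to a single uniform lemma --- that $v\mapsto A'\bar v+\bar d\,v$ has kernel of size exactly $p^{\,n-1}$ --- which you prove by identifying the kernel and image of $v\mapsto v+B\bar v$ (with $B\bar B=I$) with the con-eigenspaces $V_{\mp}(B)$ and using multiplication by $i$ to show these are half-dimensional over $\mathbb{Z}_p$. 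I checked the delicate points: $\operatorname{Im}\Psi=V_+(B)$ does follow from $B\bar B=I$ together with $\tfrac12\in\mathbb{Z}_p$, multiplication by $i$ is a $\mathbb{Z}_p$-linear automorphism interchanging $V_+(B)$ and $V_-(B)$ since $\overline{iw}=-i\bar w$, and nothing in the argument requires $\mathbb{Z}_p[i]$ to be a field (only that $i$ and $d$ are units and that rank--nullity is applied over the base field $\mathbb{Z}_p$), so the argument is valid for both $p\equiv 1$ and $p\equiv 3\pmod 4$. The trade-off is that the paper's case analysis yields an explicit construction of every coninvolution (which it reuses later), whereas your argument is shorter, avoids all case distinctions, and isolates exactly where the hypothesis that $p$ is odd enters; it only delivers the count, not the parametrization.
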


And, for the ring of Quaternion Integers module $p$ we have the following Theorem.

\begin{theorem}\label{th3} Consider $s=|SL(\mathbb{Z}_p)|=(p^2-1)p$. Then the number of coinvolutions over $T_{n}(\mathbb{Z}_p[i,j,k])$, denote by $\mathcal{CI}(n,\mathbb{Z}_p[i,j,k])$, is equal to
$$\! \sum^{min\{s,n\}}_{j=1}\!\!\!\!\!\sum_{\substack{n_1\!+\!n_2\!+\!\cdots\!+\!n_j\!=\!n\! \\ n_1\leq n_2 \leq \cdots \leq  n_j}}\frac{s!}{(s\!-\!j)!g(n_1,n_2,\!\cdots\!,n_j)}\!\!\!\binom{n}{n_1n_2\cdots n_s}  p^{\frac{1}{2}[\displaystyle n^2\!-\!\sum^j_{u=1}n_u^2\!+\!3n_u(n_u\!-\!1)]},$$
where
$g(n_1,\cdots,n_j)=r_1!\cdot r_2! \cdots r_t!$ if and only if $m_1=m_2=\cdots=m_{r_1}\neq m_{r_1+1}=m_{r_1+2}=\cdots = m_{r_1+r_2}\neq m_{r_1+r_2+1}=\cdots$
\end{theorem}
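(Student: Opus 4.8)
The plan is to pass through the classical splitting of the quaternions over a finite field. For $p$ odd there is a ring isomorphism $\mathbb{Z}_p[i,j,k]\cong M_2(\mathbb{Z}_p)$ under which the conjugation $q\mapsto\bar q$ becomes the adjugate map $X\mapsto\overline{X}:=\operatorname{adj}(X)=\operatorname{tr}(X)I_2-X$, so that $X\overline X=\overline X X=\det(X)I_2=\operatorname{Nrd}(q)I_2$. This identifies $T_n(\mathbb{Z}_p[i,j,k])$ with the block upper triangular group $T_n(M_2(\mathbb{Z}_p))$, and a coninvolution becomes a block matrix $A=(a_{rs})$, $a_{rs}\in M_2(\mathbb{Z}_p)$, with $A\overline A=I_n$, where the bar is applied block-wise. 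Reading off the $(r,r)$ block gives $a_{rr}\overline{a_{rr}}=\det(a_{rr})I_2=I_2$, so every diagonal block lies in $SL_2(\mathbb{Z}_p)$, a set of cardinality $s=(p^2-1)p$; reading off the $(r,s)$ block for $r<s$ gives $\sum_{r\le t\le s}a_{rt}\overline{a_{ts}}=0$, an identity that involves only blocks in superdiagonals of width at most $s-r$.

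I would then build $A$ recursively along the superdiagonals $s-r=1,2,\dots,n-1$. With the diagonal and all blocks strictly closer to it already chosen, the equation for $X:=a_{rs}$ reads $a_{rr}\overline X+X\overline{a_{ss}}=-c_{rs}$, where $c_{rs}:=\sum_{r<t<s}a_{rt}\overline{a_{ts}}$ is known; multiplying on the right by $a_{ss}$ and using $\overline{a_{ss}}\,a_{ss}=I_2$ this becomes the inhomogeneous $\mathbb{Z}_p$-linear equation on $M_2(\mathbb{Z}_p)\cong\mathbb{Z}_p^{4}$
\[
(\operatorname{id}-T_{rs})(X)=-c_{rs}\,a_{ss},\qquad T_{rs}(X):=-a_{rr}\,\overline X\,a_{ss}=a_{rr}Xa_{ss}-\operatorname{tr}(X)\,a_{rr}a_{ss}.
\]
The structural fact behind the whole count is that \emph{$\dim_{\mathbb{Z}_p}\ker(\operatorname{id}-T_{rs})=3$ when $a_{rr}=a_{ss}$ and $=1$ otherwise, and the displayed equation is always solvable}. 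For the dimension: the $2\times2$ Cayley--Hamilton identity gives $T_{rs}(a_{rr})=-a_{ss}$ and $T_{rs}(a_{ss})=-a_{rr}$, so $T_{rs}$ stabilises $\operatorname{span}(a_{rr},a_{ss})$ with trace $0$ there; moreover $T_{rs}^{2}$ is the operator $X\mapsto(a_{rr}a_{ss}^{-1})\,X\,(a_{rr}^{-1}a_{ss})$, whose eigenvalues are $\beta^{2},1,1,\beta^{-2}$ with $\beta^{\pm1}$ the eigenvalues of $a_{rr}a_{ss}^{-1}$. If $a_{rr}\neq\pm a_{ss}$ then $\operatorname{span}(a_{rr},a_{ss})$ is exactly the $1$-eigenspace of $T_{rs}^{2}$ and $\operatorname{id}-T_{rs}$ is invertible on its complement, so $\ker(\operatorname{id}-T_{rs})=\mathbb{Z}_p\,(a_{rr}-a_{ss})$; if $a_{rr}=a_{ss}$ then $T_{rs}^{2}=\operatorname{id}$ and $\operatorname{tr}T_{rs}=\operatorname{tr}(a_{rr})^{2}-\operatorname{tr}(a_{rr}^{2})=2$, giving a $3$-dimensional fixed space; and $a_{rr}=-a_{ss}$ falls under $\operatorname{tr}T_{rs}=-2$, again a $1$-dimensional fixed space.

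Granting solvability, the count assembles as follows. Choosing the diagonal of $A$ is choosing a map $f\colon\{1,\dots,n\}\to SL_2(\mathbb{Z}_p)$ with $f(r)=a_{rr}$; by the construction the number of coninvolutions with that diagonal is $\prod_{r<s}p^{\dim\ker(\operatorname{id}-T_{rs})}=p^{\binom n2}\,p^{\,2\sum_{g}\binom{|f^{-1}(g)|}{2}}$, which depends on $f$ only through the unordered partition of $\{1,\dots,n\}$ into the level sets of $f$. Summing over $f$ and grouping by the multiset of level-set sizes $n_1\le\cdots\le n_j$: the number of set partitions of $\{1,\dots,n\}$ with those block sizes is $\binom{n}{n_1\,n_2\cdots n_j}/g(n_1,\dots,n_j)$, the number of injective labellings of the $j$ blocks by distinct elements of $SL_2(\mathbb{Z}_p)$ is $s!/(s-j)!$, and the exponent $\binom n2+2\sum_{u}\binom{n_u}{2}$ rewrites as $\tfrac12\!\left[n^{2}-\sum_{u}n_u^{2}+3\sum_{u}n_u(n_u-1)\right]$; multiplying and summing reproduces the stated formula exactly. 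As consistency checks, summing the combinatorial prefactors over all $j$ and all partitions returns $\sum_{f}1=s^{n}$, and for $n=1$ the formula collapses to $s=|SL_2(\mathbb{Z}_p)|$.

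The hard part is the solvability statement: one must show that at every step $-c_{rs}a_{ss}$ lies in $\operatorname{im}(\operatorname{id}-T_{rs})$, i.e.\ that the partial matrix produced so far never obstructs the extension. Using the non-degenerate symmetric trace form $\langle A,B\rangle=\operatorname{tr}(AB)$ on $M_2(\mathbb{Z}_p)$, a short computation identifies $\operatorname{im}(\operatorname{id}-T_{rs})^{\perp}=\mathbb{Z}_p\,\overline{a_{rr}-a_{ss}}$ when $a_{rr}\neq a_{ss}$, and $\operatorname{im}(\operatorname{id}-T_{rs})=\mathbb{Z}_p\,a_{rr}$ when $a_{rr}=a_{ss}$; hence solvability amounts to the explicit conditions
\[
\operatorname{tr}\!\big(c_{rs}\,a_{ss}\,a_{rr}^{-1}\big)=\operatorname{tr}(c_{rs})\ \ (\text{if }a_{rr}\neq a_{ss}),\qquad c_{rs}\in\mathbb{Z}_p\,I_2\ \ (\text{if }a_{rr}=a_{ss}),
\]
which I would establish by strong induction on $s-r$, feeding in the already-proved relations $(A\overline A)_{rt}=0$ and $(A\overline A)_{t,s}=0$ for $r<t<s$ and expanding $c_{rs}=\sum_{r<t<s}a_{rt}\overline{a_{ts}}$ through them. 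Everything else — the splitting isomorphism, the eigenspace dimensions, and the final combinatorial bookkeeping — is routine once these identities are in hand.
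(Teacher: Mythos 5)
Your proposal is essentially the paper's own argument: the same splitting $\mathbb{Z}_p[i,j,k]\cong M_2(\mathbb{Z}_p)$ under which conjugation becomes the adjugate (the paper's $\widetilde{A}$ and Lemma \ref{lemma1}), the same dichotomy of $3$ versus $1$ free parameters for the block $a_{rs}$ according to whether $a_{rr}=a_{ss}$ (the content of Theorem \ref{teo4}), and the same multinomial bookkeeping and exponent identity $\binom{n}{2}+2\sum_u\binom{n_u}{2}=\frac12\bigl[n^2-\sum_u n_u^2+3\sum_u n_u(n_u-1)\bigr]$ at the end. The differences are only in execution: you extract the kernel dimensions from the spectrum of $T_{rs}$ where the paper solves the $4\times 4$ systems (\ref{system1})--(\ref{system2}) by exhaustive case analysis (note your eigenvalue argument for $T_{rs}^2$ needs a separate word when $a_{rr}a_{ss}^{-1}$ has eigenvalues $\pm 1$ yet $a_{rr}\neq\pm a_{ss}$), and the consistency of the inhomogeneous equation, which you rightly isolate as the hard point but leave as a sketch, is also the step the paper dispatches most quickly, by simply asserting the necessary vanishing of $B$, $C$ and $A-D$ in the equal-diagonal case.
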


\section{Coninvolutions over the Gaussian Integers module $p$}
Consider $p>2$ a prime number. Let $\mathbb{Z}_p[i]=\{a+ib,\ a,b \in \mathbb{Z}_p \ and \ i^2=-1\}$ be the Gaussian integers module  $p$, then if  $z=a+ib \in \mathbb{Z}_p[i]$ denote by $Re(z)=a$ and $Im(z)=b$ the real and imaginary parts of $z$, respectively. Also, define the natural complex conjugation $\bar{z}=a+(p-1)ib=a-ib$ and $|z|^2=z\cdot \bar{z}=a^2+b^2$ the natural  modulus. For a matrix $A=(z_{rs})\in T_n(\mathbb{Z}_p[i])$ define their complex conjugation as $\bar{A}=(\bar{z}_{rs})$. A matrix $A\in T_n(\mathbb{Z}_p[i])$  is called a coninvolution if $A\bar{A}=I_n$ where $I_n$ is the identity matrix.  Simillarly to \cite{Slowik1} we have the following theorem:

\begin{theorem}\label{teo1} Let $\mathbb{Z}_p[i]$ the ring of Gaussian integers module $p$. A matrix $A=(z_{rs}) \in T_n(\mathbb{Z}_p[i])$ is a convinvolution if and only if $A$ is described by the following statements:
\begin{itemize}
    \item [(i)] For all $z_{rr}$ with $1\leq r \leq n$ we have $|z_{rr}|^2=1$.
    
    \item[(ii)] If $z_{rr}=\bar{z}_{ss}$ then 
    $$z_{rs}=\left\{\begin{array}{ll}i\gamma &, \ if \ s=r+1 \\ -\displaystyle\frac{(z_{rr})^{-1}}{2}\cdot\displaystyle\sum^{s-1}_{t=r+1}z_{rt}\bar{z}_{ts}+i\gamma &,\ if \ s>r+1,\end{array}\right.$$  
    \item[(iii)] If $z_{rr}=-\bar{z}_{ss}$ then
    $$z_{rs}=\left\{\begin{array}{ll}\gamma &, \ if \ s=r+1 \\ \gamma-\displaystyle\frac{(z_{rr})^{-1}}{2}\cdot \displaystyle \sum^{s-1}_{t=r+1}z_{rt}\bar{z}_{ts} &, \ if \ s>r+1,  \end{array}\right.$$
    \item[(iv)] If $z_{rr}=z_{ss}$ then
    \item[a.)] If $Re(z_{rr}) \neq 0$ we have
    $$z_{rs}=\left\{\begin{array}{ll}- [Re(z_{rr})^{-1}Im(z_{rr})-i]\gamma&, \ if \ s=r+1 \\
    -(2\ Re(z_{rr}))^{-1}\cdot \displaystyle\sum^{s-1}_{t=r+1}z_{rt}\bar{z}_{ts}- [Re(z_{rr})^{-1}Im(z_{rr})-i]\gamma \!\!\!&,\ if \ s>r+1\end{array}\right.$$
    \item[b.)] If $Im(z_{rr})\neq 0$ we have
    $$z_{rs}=\left\{\begin{array}{ll}[1-i Im(z_{rr})^{-1} Re(z_{rr})]\gamma \!\!\!&,\ if \ s=r+1 \\ 
    (1-i Im(z_{rr})^{-1} Re(z_{rr}))\gamma - i(2 Im(z_{rr}))^{-1}\cdot \displaystyle\sum^{s-1}_{t=r+1}z_{rt}\bar{z}_{ts}&,\ if \ s>r+1,\end{array}\right.$$
    \item[(v)] If $z_{rr}=-z_{ss}$ then
    \item[a.)] If $Re(z_{rr})\neq 0$ we have
    $$z_{rs}=\left\{\begin{array}{ll}[1+i\ Re(z_{rr})^{-1} Im(z_{rr})]\gamma\!\!\!&\!\!\!,\ if \ s=r+1 \\ 
    (2\ Re(z_{rr}))^{-1}\cdot ( \displaystyle\sum^{s-1}_{t=r+1}z_{rt}\bar{z}_{ts})+[1+i\ Re(z_{rr})^{-1} Im(z_{rr})]\gamma\!\!\!&\!\!\!, \ if \ s>r+1,\end{array}\right.$$
    \item[b.)] If $Im(z_{rr})\neq 0 $ we have
    $$z_{rs}=\left\{\begin{array}{ll}[(Im(z_{rr})^{-1}\ Re(z_{rr})+i] \gamma\!\!\!&\!\!\!,\ if \ s=r+1 \\ 
    i(2\ Im(z_{rr}))^{-1}\cdot ( \displaystyle\sum^{s-1}_{t=r+1}z_{rt}\bar{z}_{ts})+   [(Im(z_{rr})^{-1}\ Re(z_{rr})+i] \gamma\!\!\!&\!\!\!,\ if \ s>r+1,
     \end{array}\right.$$
    \item[(vi)] If $z_{rr}\neq \pm z_{ss}$ and $z_{rr}\neq \pm \bar{z}_{ss}$ then
    $$z_{rs}=\left\{\begin{array}{ll}[- (Re(z_{rr}+z_{ss}))^{-1}\cdot Im(z_{rr}+z_{ss}
    )+i]\gamma\!\!\!&\!\!\!,\ if \ s=r+1 \\ 
    Re(z_{rr}+z_{ss})^{-1}  Re(-\displaystyle\sum^{s-1}_{t=r+1}z_{rt}\bar{z}_{ts})+ & \\
    \ \ \ \ \ \ \ \ \ \  \ \ \ \ \ \ \ \ +[- (Re(z_{rr}+z_{ss}))^{-1}\cdot Im(z_{rr}+z_{ss}
    )+i]\gamma\!\!\!&\!\!\!,\ if \ s>r+1,
    \end{array}\right.$$
    where $\gamma\in \mathbb{Z}_p$, in all cases, may be arbitrary.
    \end{itemize}
\end{theorem}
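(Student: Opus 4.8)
Following the strategy used for involutions in \cite{Slowik1}, the plan is to expand the matrix identity $A\bar A=I_n$ entry by entry. Write $B=A\bar A$; since $A$ is upper triangular, $B_{rs}=\sum_{t=r}^{s}z_{rt}\bar z_{ts}$, which already vanishes for $r>s$. The diagonal gives $B_{rr}=|z_{rr}|^2$, so $A\bar A=I_n$ forces (i), which we assume henceforth. For $r<s$ the equation $B_{rs}=0$ reads
\[
z_{rr}\bar z_{rs}+z_{rs}\bar z_{ss}=-C_{rs},\qquad C_{rs}:=\sum_{t=r+1}^{s-1}z_{rt}\bar z_{ts},
\]
and its right-hand side involves only entries strictly closer to the diagonal than $(r,s)$. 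Hence $A$ is a coninvolution iff (i) holds and this equation holds for every $r<s$; I will argue by induction on the distance $d=s-r$ (with $C_{rs}$ empty, hence $0$, when $d=1$), showing that once all equations at distances $<d$ are imposed, the solution set of the $d$-th equation in the unknown $z_{rs}$ is exactly the affine line parametrised in (ii)--(vi).

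Next comes the linear algebra. Putting $z_{rr}=a_r+b_ri$, $z_{ss}=a_s+b_si$ and $z_{rs}=x+yi$, the displayed equation is the $\mathbb Z_p$-linear system
\[
\begin{pmatrix}a_r+a_s & b_r+b_s\\ b_r-b_s & a_s-a_r\end{pmatrix}\begin{pmatrix}x\\ y\end{pmatrix}=\begin{pmatrix}-\mathrm{Re}\,C_{rs}\\ -\mathrm{Im}\,C_{rs}\end{pmatrix}.
\]
The coefficient matrix has determinant $(a_s^2+b_s^2)-(a_r^2+b_r^2)=|z_{ss}|^2-|z_{rr}|^2=0$ by (i), but is never the zero matrix (otherwise $z_{rr}=0$, contradicting $|z_{rr}|^2=1$); so it has $\mathbb Z_p$-rank exactly $1$, its kernel is a line --- where the free parameter $\gamma$ lives --- and the inhomogeneous system has either $0$ or $p$ solutions. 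Solving this rank-one system, and splitting into cases according to which of $z_{rr}=\bar z_{ss}$, $z_{rr}=-\bar z_{ss}$, $z_{rr}=z_{ss}$, $z_{rr}=-z_{ss}$ holds (and, in the last two, whether $\mathrm{Re}$ or $\mathrm{Im}$ of the relevant diagonal quantity is nonzero), yields precisely the kernel vector and the particular solution displayed in (ii)--(vi). One must also check that these cases are exhaustive, that in case (vi) $\mathrm{Re}(z_{rr}+z_{ss})\neq0$ automatically --- again because $|z_{rr}|^2=|z_{ss}|^2=1$ --- so the stated formula makes sense, and that on the overlaps (for instance $z_{rr}=z_{ss}=\pm1$, covered by both (ii) and (iv)) the formulas agree.

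What makes the inhomogeneous system consistent --- so that it really has $p$ solutions, and the particular solution displayed in (ii)--(vi) is genuinely one of them --- is the auxiliary identity
\[
z_{rr}\,\overline{C_{rs}}=C_{rs}\,z_{ss},
\]
valid as soon as all equations at distances $<d$ hold; this is the inductive hypothesis in the ``if'' direction and is automatic (from $A\bar A=I_n$) in the ``only if'' direction, and I expect establishing and exploiting it to be the crux of the proof. One derivation substitutes the distance-$(r,t)$ equation $z_{rr}\bar z_{rt}+z_{rt}\bar z_{tt}+C_{rt}=0$ and the conjugate of the distance-$(t,s)$ equation into $\overline{C_{rs}}=\sum_{t}\bar z_{rt}z_{ts}$, after which the two triple sums $\sum_{r<t<u<s}z_{rt}\bar z_{tu}z_{us}$ that arise cancel; alternatively, comparing $(r,s)$-entries in $A\,\overline{A\bar A}=(A\bar A)A$, using that $A\bar A-I_n$ is supported on distances $\ge d$, gives the companion relation $z_{rr}\overline{B_{rs}}=B_{rs}z_{ss}$. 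Granting the identity, a short case-by-case computation --- plug the displayed formula for $z_{rs}$ into $B_{rs}=z_{rr}\bar z_{rs}+z_{rs}\bar z_{ss}+C_{rs}$ and simplify using the identity and $2\in\mathbb Z_p^{\times}$ --- shows $B_{rs}=0$, which closes the induction in the ``if'' direction; in the ``only if'' direction the same computation shows the displayed particular solution really solves the system, and since the solution set is that particular solution plus the one-dimensional kernel, every coninvolution has $(r,s)$-entry of the claimed form. I would close with the remark that in each case the coefficient of $\gamma$ has real part or imaginary part equal to $1$, hence is a nonzero element of $\mathbb Z_p[i]$, and a nonzero element of $\mathbb Z_p[i]$ annihilates no nonzero element of $\mathbb Z_p$; so $\gamma$ indeed runs over $p$ distinct values of $z_{rs}$ --- relevant for the count in Theorem~\ref{teo2}, though not for the equivalence itself.
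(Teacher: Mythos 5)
Your proposal is correct and follows the paper's overall route: expand $A\bar{A}=I_n$ entry by entry, reduce each off-diagonal equation to a $2\times 2$ linear system over $\mathbb{Z}_p$ in $x=\mathrm{Re}(z_{rs})$, $y=\mathrm{Im}(z_{rs})$ whose coefficient matrix is singular (determinant $|z_{ss}|^2-|z_{rr}|^2=0$ by (i)) but nonzero, and solve it case by case according to the relation between $z_{rr}$ and $z_{ss}$. The genuine difference is your treatment of solvability of the inhomogeneous system. The paper manipulates the system only formally: in cases (ii)--(v) it solves one real scalar equation and never verifies that the remaining one is then satisfied, and in case (vi) it shows the coefficient of $y$ vanishes but merely asserts that the right-hand side vanishes as well. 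That assertion is exactly the consistency of the rank-one system; it is not a consequence of (i) alone, and it is what your identity $z_{rr}\overline{C_{rs}}=C_{rs}z_{ss}$ (proved by cancelling the two triple sums, and propagated by induction on $s-r$) supplies --- the image of $w\mapsto z_{rr}\bar{w}+w\bar{z}_{ss}$ is precisely the line $\{u:\ z_{rr}\bar{u}=uz_{ss}\}$, so consistency is equivalent to your identity. Your version therefore yields a complete ``if and only if'', whereas the paper's argument, read literally, only determines the shape of the solution set assuming it is nonempty. Your auxiliary checks (exhaustiveness of the cases, $\mathrm{Re}(z_{rr}+z_{ss})\neq 0$ in case (vi), agreement of the formulas on overlapping cases, injectivity in $\gamma$) are all correct and routine.
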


\begin{proof} For $r\neq s$ consider $z_{rs}=x+iy$ with $x,y \in \mathbb{Z}_p$.
\begin{itemize} 
    \item [(i)] If $A=(z_{rs})$ is a coninvolution then $z_{rr}\bar{z}_{rr}=|z_{rr}|^2=1$ for all $r$. Also, we have if $s>r+1$, the following equation
    $$z_{rr}\bar{z}_{rs}+z_{r(r+1)}\bar{z}_{(r+1)s}+ \cdots + z_{r(s-1)}\bar{z}_{(s-1)s}+z_{rs}\bar{z}_{ss}=0.$$
or
\begin{equation}\label{eq1}    
z_{rr}\cdot \bar{z}_{rs}+z_{rs}\cdot \bar{z}_{ss}= - \displaystyle \sum_{p=r+1}^{s-1} z_{rp}\cdot \bar{z}_{ps}.
\end{equation}    
    And, if $s=r+1$ we have only the equation
    $$z_{rr}\cdot\bar{z}_{r,r+1}+z_{r,r+1}\cdot \bar{z}_{r+1,r+1}=0.$$
    So, we analise only the case if $s>r+1$.
    \item[(ii)] In this case, 
    $$z_{rr}\cdot \bar{z}_{rs}+z_{rs}\cdot \bar{z}_{ss}=z_{rr}\cdot      Re(z_{rs})= - \displaystyle \sum_{t=r+1}^{s-1} z_{rt}\cdot \bar{z}_{ts},$$ then
    $$x=Re(z_{rs})=(z_{rr})^{-1}\cdot (- \displaystyle \sum_{t=r+1}^{s-1} z_{rt}\cdot \bar{z}_{ts}),$$
    here $z_{rs}=x+iy$ with $y \in \mathbb{Z}_p$ may be arbitrary.
    \item[(iii)] Is similar that (ii).
    \item[(iv)] If $z_{rr}=z_{ss}$, denote by $z_{rr}=a+ib$ and $z_{ss}=\alpha+i \beta$, with $a,b,\alpha,\beta \in \mathbb{Z}_p$, then the equation (\ref{eq1}) is expressed in the form:
\begin{equation}\label{eq2}
    - \displaystyle \sum_{t=r+1}^{s-1} z_{rt}\cdot \bar{z}_{ts}= \left[(a+\alpha)x+(b+\beta)y\right]+ i \left[(b-\beta)x+(\alpha-a)y\right].
\end{equation}
    In the case that $a\neq 0$ then we have 
    $$x=(2a)^{-1}\left\{- \displaystyle \sum_{t=r+1}^{s-1} z_{rt}\cdot \bar{z}_{ts}-2by\right\},$$
    so
    $$\begin{array}{ll}z_{rs}&= x+iy=(2a)^{-1}\left\{- \displaystyle \sum_{t=r+1}^{s-1} z_{rt}\cdot \bar{z}_{ts}-2by\right\}+iy \\ & = (2a)^{-1} \left\{- \displaystyle \sum_{t=r+1}^{s-1} z_{rt}\cdot \bar{z}_{ts}\right\}- \left\{a^{-1}b-i\right\}y,\end{array} $$
    with $y \in \mathbb{Z}_p$ may be arbitrary.
    The case $b\neq 0$ is similar.
    
    \item[(v)] If $z_{rr}=-z_{ss}$ and $a\neq 0$ then follows from the equation (\ref{eq2}) we have that
    $$y=(-2ai)^{-1}\left\{- \displaystyle \sum_{t=r+1}^{s-1} z_{rt}\cdot \bar{z}_{ts}-2bi x\right\},$$
    so
    $$\begin{array}{ll}z_{rs}&= x+iy=x+(-2ai)^{-1}\left\{- \displaystyle \sum_{t=r+1}^{s-1} z_{rt}\cdot \bar{z}_{ts}-2bi x\right\}i \\ &= (-2a)^{-1}\left\{- \displaystyle \sum_{t=r+1}^{s-1} z_{rt}\cdot \bar{z}_{ts}\right\}+\left\{1+ia^{-1}b \right\}x,\end{array} $$
with $x \in \mathbb{Z}_p$ may be arbitrary. The case $b\neq 0$ is similar.    
 \item[(vi)] The equation (\ref{eq2}) can be write in the following linear system
 \begin{eqnarray}
     (a+\alpha)x+(b+\beta)y = Re(-  \sum_{t=r+1}^{s-1} z_{rt}\cdot \bar{z}_{ts}), \label{equat1} \\ (b-\beta)x+(\alpha-a)y = Im(-  \sum_{t=r+1}^{s-1} z_{rt}\cdot \bar{z}_{ts}). \label{equat2}
 \end{eqnarray}
 Here, of the equation (\ref{equat1}) we have that
 $$x=(a+\alpha)^{-1}\cdot [ Re(-  \sum_{t=r+1}^{s-1} z_{rt}\cdot \bar{z}_{ts})-(b+\beta)y ] $$
 and substituting into the equation (\ref{equat2}) we have $$(b-\beta)\cdot (a+\alpha)^{-1}\cdot [ Re(- \displaystyle \sum_{t=r+1}^{s-1} z_{rt}\cdot \bar{z}_{ts})-(b+\beta)y ]+(\alpha-a)y=Im(- \displaystyle \sum_{t=r+1}^{s-1} z_{rt}\cdot \bar{z}_{ts}),$$
then
$$
\begin{array}{lc}
    y \left\{(\alpha-a)-(b^2-\beta^2)(a+\alpha)^{-1}\right\}=  \\
 \ \ \ \ \ \ \ \   \ \ \ \ \ \ \ \ \ \ \ \ =Im(- \displaystyle \sum_{t=r+1}^{s-1} z_{rt}\cdot \bar{z}_{ts})-(b-\beta)(a+\alpha)^{-1}\cdot Re(- \displaystyle \sum_{t=r+1}^{s-1} z_{rt}\cdot \bar{z}_{ts}).
\end{array}
$$
Notice that, by the item (i) we have that $a^2+b^2=\alpha^2+\beta^2=1$ and here $b^2-\beta^2=\alpha^2-a^2=(\alpha-a)(\alpha+a)$ 
then $(\alpha-a)=(b^2-\beta^2)(a+\alpha)^{-1}$. Follow that the second side of the last identity is null and also, $(\alpha-a)-(b^2-\beta^2)(a+\alpha)^{-1}=0$, so $y\in \mathbb{Z}_p$ can be arbitrary. Finally, we have
$$\begin{array}{rl}z_{rs}=&x+iy=(a+\alpha)^{-1}\cdot [ Re(- \displaystyle \sum_{t=r+1}^{s-1} z_{rt}\cdot \bar{z}_{ts})-(b+\beta)y ]+iy \\ =& (a+\alpha)^{-1}\cdot Re(- \displaystyle \sum_{t=r+1}^{s-1} z_{rt}\cdot \bar{z}_{ts}) + \left\{-(a+\alpha)^{-1}\cdot(b+\beta)+i\right\}y. \end{array}$$
\end{itemize}
\end{proof}

Then, with this, we show the Theorem \ref{teo2}:
\begin{proof}[Proof of Theorem \ref{teo2}] Follow immediately from Theorem \ref{teo1} that, if $A$ is a Coninvolution in $T_n(\mathbb{Z}_p[i])$ then over the main diagonal we can choose any element in $U(\mathbb{Z}_p[i])$ and independent from these choices all entries over the main diagonal can be choose that depending from one variable. How we can $\frac{(n-1)n}{2}$ entries over the main diagonal then we conclude that we are $$|U(\mathbb{Z}_p[i])|^n\times p^{\frac{(n-1)n}{2}}$$ and this conclude the proof.
\end{proof}


\section{Coninvolutions over the Quaternion Integers module $p$}
Consider the set $\{i,j,k\}$ such that satisfies the relations $i^2=j^2=k^2=-1$ and $ij=-ji=k$, and define the set
$$\mathbb{Z}_p[i,j,k]=\{z=x_0+x_1i+x_2j+x_3k, \ x_0,x_1,x_2,x_3\in \mathbb{Z}_p\},$$
with natural operations of sum and product. The set $\mathbb{Z}_p[i,j,k]$ is called the ring of Quaternions Integers module $p$. We define the conjugation of the number $z$ as $$\bar{z}=x_0+(p-1)x_1i+(p-1)x_2 j+(p-1)x_3 k=x_0-x_1i-x_2j-x_3k.$$ 

If $A=(A_{rs})\in T_n(\mathbb{Z}_p[i,j,k])$ is an upper triangular matrix, then define the complex conjugation of $A$ as $\bar{A}=(\bar{A}_{rs}).$ An matrix $A\in T_n(\mathbb{Z}_p[i,j,k])$ is called an coninvolution if $A\bar{A}=I_n$ where $I_n$ is the identity in $T_n(\mathbb{Z}_p[i,j,k])$. In order to calculate the number of coninvolutions on $T_n(\mathbb{Z}_p[i,j,k])$ we consider the isomorphims multiplicative $\varphi : \mathbb{Z}_p[i,j,k] \to M_2(\mathbb{Z}_p)$
define by
$$\varphi(z)=\varphi(x_0+x_1i+x_2j+x_3k)=$$
$$=x_0\left[\begin{array}{cc}1 & 0 \\ 0 & 1\end{array}\right]+x_1\left[\begin{array}{cc}0 & 1 \\ p-1 & 0\end{array}\right]+x_2\left[\begin{array}{cc}a & b \\ b & p-a \end{array}\right]+x_3\left[\begin{array}{cc}b & p-a \\ p-a & p-b\end{array}\right],$$
where $a,b\in \mathbb{Z}_p$ such that $a^2+b^2=p-1$. These numbers exists if $p$ is an odd  prime number and in this case $\varphi$ is an isomorphism multiplicative (see \cite{Miguel}). 
For $A=\left[\begin{array}{cc}
M_1 & M_2 \\ M_3 & M_4 
\end{array}\right] \in M_2(\mathbb{Z}_p)$ define the $\varphi$-conjugation of $A$ and denote by $\widetilde{A}$ to the matrix $\left[\begin{array}{cc}M_4 & -M_2 \\ -M_3 & M_1\end{array}\right].$
If $\varphi(z)=A$ then is not difficult show that $\varphi(\bar{z})=\widetilde{A}$.  Using this isomorphism we can enunciated the following result.
\begin{lemma}\label{lemma1}
Let $z \in \mathbb{Z}_p[i,j,k]$ and consider $\varphi(z)=A\in M_2(\mathbb{Z}_p)$ their respective representation. Then
$z\cdot \bar{z}=1$ if and only if $\det (A)=1$. So, there are $(p^2-1)p$ elements in $\mathbb{Z}_p[i,j,k]$ that satisfies the equation $z\bar{z}=1$.
\end{lemma}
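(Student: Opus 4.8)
The plan is to transfer the equation $z\bar{z}=1$ through the multiplicative isomorphism $\varphi$. Since $\varphi$ is multiplicative and $\varphi(1)=I_2$, we have $z\bar{z}=1$ if and only if $\varphi(z)\,\varphi(\bar{z})=I_2$. Using the relation $\varphi(\bar{z})=\widetilde{A}$ recorded just above the statement, this is equivalent to $A\widetilde{A}=I_2$, so the whole lemma reduces to understanding the product $A\widetilde{A}$.

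Next I would compute that product directly. Writing $A=\left[\begin{array}{cc}M_1 & M_2\\ M_3 & M_4\end{array}\right]$ with $M_1,\dots,M_4\in\mathbb{Z}_p$, a short multiplication gives
\[
A\widetilde{A}=\left[\begin{array}{cc}M_1 & M_2\\ M_3 & M_4\end{array}\right]\left[\begin{array}{cc}M_4 & -M_2\\ -M_3 & M_1\end{array}\right]=\left[\begin{array}{cc}M_1M_4-M_2M_3 & 0\\ 0 & M_1M_4-M_2M_3\end{array}\right]=(\det A)\,I_2 .
\]
Hence $A\widetilde{A}=I_2$ exactly when $\det A=1$, and combining with the previous paragraph we conclude $z\bar{z}=1\iff\det A=1$, which is the first claim.

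For the enumeration, $\varphi$ is a bijection of $\mathbb{Z}_p[i,j,k]$ onto $M_2(\mathbb{Z}_p)$, so the number of $z$ with $z\bar{z}=1$ equals the number of $A\in M_2(\mathbb{Z}_p)$ with $\det A=1$, that is $|SL_2(\mathbb{Z}_p)|$. This is the standard count: $|GL_2(\mathbb{Z}_p)|=(p^2-1)(p^2-p)$, the map $\det\colon GL_2(\mathbb{Z}_p)\to\mathbb{Z}_p^{\times}$ is surjective with all fibers of equal size, so $|SL_2(\mathbb{Z}_p)|=(p^2-1)(p^2-p)/(p-1)=(p^2-1)p$.

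I do not expect a genuine obstacle here: the statement is essentially immediate once $\varphi$ and the operation $\widetilde{(\cdot)}$ are in hand. The only points requiring (routine) verification are the identity $\varphi(\bar{z})=\widetilde{A}$ — already asserted in the text and checked by comparing $\varphi$ on $\{1,i,j,k\}$ with the images of their conjugates — and the $2\times 2$ bookkeeping $A\widetilde{A}=(\det A)I_2$ above; the final number is just the well known order of $SL_2$ over $\mathbb{Z}_p$.
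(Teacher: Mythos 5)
Your proof is correct and follows essentially the same route as the paper: transfer $z\bar z=1$ through $\varphi$ to $A\widetilde{A}=I_2$, observe that $\widetilde{A}$ is the adjugate so $A\widetilde{A}=(\det A)I_2$, and count $|SL_2(\mathbb{Z}_p)|=(p^2-1)p$. You simply spell out the adjugate computation and the $SL_2$ order, which the paper leaves as brief observations.
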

\begin{proof}
By the isomorpshim $\varphi$ this is equivalent to proof that $A\widetilde{A}=I$. If $A=\left[\begin{array}{cc}M_1 & M_2 \\ M_3 & M_4\end{array}\right]$ then the above equation is true if $\det(A)=M_1M_4-M_2M_3=1$. We conclude the proof by the observation that  $|SL_2(\mathbb{Z}_p)|=(p^2-1)p.$ 
\end{proof}

 We consider the block upper triangular matrix group $T_n(M_2(\mathbb{Z}_p))$ where the entries are $2\times 2$ matrices with entries in $\mathbb{Z}_p$. In this group, $M=(M_{rs})\in T_n(M_2(\mathbb{Z}_p))$ is called an coninvolution if $M\widetilde{M}=I$ where $\widetilde{M}=(\widetilde{M}_{rs}).$ The isomorphism multiplicative $\varphi$ can be extended naturally to an isomorphism multiplicative into the groups $T_n(\mathbb{Z}_p[i,j,k])$ and $T_n(M_2(\mathbb{Z}_p))$, so, by the isomorphism $\varphi$, we can conclude that  $A=(a_{rs})$ is a coninvolution in $T_n(\mathbb{Z}_p[i,j,k])$ if and only if $M=(\varphi(a_{rs}))$ is a coninvolution in $T_n(M_2(\mathbb{Z}_p))$. The following Theorem study how we can construct a coninvolution in $T_n(M_2(\mathbb{Z}_p))$.

\begin{theorem}\label{teo4} A block upper triangular matrix $M=(Z_{rs})\in T_n(M_2(\mathbb{Z}_p))$ is a convinvolution if and only if $M$ described by the following statements:

\begin{itemize}
    \item [1.)] For all $Z_{rr}$ we have $Z_{rr}\widetilde{Z}_{rr}=I_2$ and we can conclude that $\det(Z_{rr})=1$.
    \item[2.)] Denote by $Z_{rr}=\left[\begin{array}{cc}
         a & b \\ c & d\end{array}\right]$, $Z_{ss}=\left[\begin{array}{cc}x \!&\! y \\ w\! &\! z \end{array}\right]$ and in the case that $s>r+1$ denote by $-\displaystyle \sum^{s-1}_{t=t+1}Z_{rt}\widetilde{Z}_{ts}=\left[\begin{array}{cc}A & B \\ C & D\end{array}\right]$ and $\theta=det(Z_{ss}-Z_{rr})$. Then, for all $Z_{r,s}$ we have
    \item[a.)] If $Z_{rr}=Z_{ss}$, then
    \begin{itemize}
        \item [(i)] If $=a\neq 0$ then $Z_{rs}$ is equals to:
        $$\left\{\!\!\begin{array}{*{20}{ll}}\!\beta_1\!\left[\begin{array}{*{10}{cc}}\!1\! &\! 0\! \\ \!0\! &\! -a^{-1}d\!\end{array}\right]\! + \!\beta_2\!\left[\begin{array}{cc}0 & 1 \\ 0 & a^{-1}c\end{array}\right]\!+\!\beta_3\!\left[\begin{array}{cc}0 & 0 \\ 1 & a^{-1}b\end{array}\right]\!\!\!&\!\!\! ,\ if \ \!s\!=\!r\!+\!1 \\
        &\\
        \left[\begin{array}{cc}0 & 0 \\ 0 & a^{-1}A\end{array}\right]\!+\!\beta_1\!\left[\begin{array}{cc}1 & 0 \\ 0 & -a^{-1}d\end{array}\right]\!+\!\beta_2\!\left[\begin{array}{cc}0 & 1 \\ 0 & a^{-1}c\end{array}\right]\!+\!\beta_3\!\left[\begin{array}{cc}0 & 0 \\ 1 & a^{-1}b\end{array}\right]\!\!\!&\!\!\!,\ if \!\ \!  s\!>\!r\!+\!1\end{array}\right.$$
        
        \item[(ii)] If $b\neq 0$ then $Z_{rs}$ is equal to:
        $$\!\left\{\!\!\begin{array}{ll}\!\beta_1\!\left[\begin{array}{cc}1 & 0 \\ b^{-1}d & 0  \end{array}\right]\!+\!\beta_2\!\left[\begin{array}{cc}0 & 1 \\ -b^{-1}c&0 \end{array}\right]\!+\!\beta_3\!\left[\begin{array}{cc}0 & 0 \\ b^{-1}a & 1\end{array}\right]\!\!\!&\!\!\!,\ if \!\ \! s\!=\!r\!+\!1\! \\
        &\\
        \!\left[\begin{array}{cc}0 & 0 \\ -b^{-1}A &  \end{array}\right]\!+\!\beta_1\!\left[\begin{array}{cc}1 & 0 \\ b^{-1}d & 0  \end{array}\right]\!+\!\beta_2\!\left[\begin{array}{cc}0 & 1 \\ -b^{-1}c&0 \end{array}\right]\!+\!\beta_3\!\left[\begin{array}{cc}0 & 0 \\ b^{-1}a & 1\end{array}\right]\!\!\!&\!\!\!,\ if\!\ \!s\!>\!r\!+\!1\!
        \end{array}\right.$$
        
        \item[(iii)] If $c\neq 0$ then $Z_{rs}$ is equal to:
        $$\left\{\!\begin{array}{ll}\!\beta_1\!\left[\begin{array}{cc}1 & c^{-1}d \\ 0 & 0\end{array}\right]\!+\!\beta_2\!\left[\begin{array}{cc}0 & -c^{-1}b \\ 1 & 0\end{array}\right]\!+\!\beta_3\!\left[\begin{array}{cc}0 & c^{-1}a \\ 0 & 1\end{array}\right]\!\!\! &\!\!\!,\ if\! \ \! s\!=\!r\!+\!1\! \\
        &\\
        \!\left[\begin{array}{cc}0 & -c^{-1}A \\ 0 & 0\end{array}\right]\!+\!\beta_1\!\left[\begin{array}{cc}1 & c^{-1}d \\ 0 & 0\end{array}\right]\!+\!\beta_2\!\left[\begin{array}{cc}0 & -c^{-1}b \\ 1 & 0\end{array}\right]\!+\!\beta_3\!\left[\begin{array}{cc}0 & c^{-1}a \\ 0 & 1\end{array}\right]\!\!\!&\!\!\!,\ if\! \ \!s\!>\!r\!+\!1\!\end{array}\right.$$
        \item[(iv)] If $d\neq 0$
       then $Z_{rs}$ is equal to:
       $$\!\left\{\!\begin{array}{ll}\!\beta_1\!\left[\begin{array}{cc}d^{-1}c& 1 \\ 0 & 0\end{array}\right]\!+\!\beta_2\!\left[\begin{array}{cc}d^{-1}b& 0 \\ 1 & 0\end{array}\right]\!+\!\beta_3\!\left[\begin{array}{cc}-d^{-1}a& 0 \\ 0 & 1\end{array}\right]\!\!\!&\!\!\!,\ if \!\ \!s\!=\!r\!+\!1\! \\
       &\\
       \!\left[\begin{array}{cc}d^{-1}A& 0 \\ 0 & 0\end{array}\right]\!+\!\beta_1\!\left[\begin{array}{cc}d^{-1}c& 1 \\ 0 & 0\end{array}\right]\!+\!\beta_2\!\left[\begin{array}{cc}d^{-1}b& 0 \\ 1 & 0\end{array}\right]\!+\!\beta_3\!\left[\begin{array}{cc}-d^{-1}a& 0 \\ 0 & 1\end{array}\right]\!\!\!&\!\!\!,\ if\!\ \!s\!>\!r\!+\!1\!\end{array}\right.,$$
       where $\beta_1,\beta_2,\beta_3\in \mathbb{Z}_p$ are arbitrary elements, so, depending from three variables.
    \end{itemize}
    \item[b.)] If $Z_{rr}\neq Z_{ss}$ then we consider the following cases
    \begin{itemize}
        \item [(i)] If $a=x,b\neq y, c \neq w$ and $d\neq z$ then $Z_{rs}$ is equal to
        $$\left\{\begin{array}{ll}\gamma\left[\begin{array}{cc} 0 & (z-d)^{-1}(y-b) \\ -(z-d)^{-1}(c-w) & 1\end{array}\right]&,\ if \ s=r+1 \\
        &\\
        \left[\begin{array}{cc} (b-y)^{-1}B & (c-w)^{-1}W-(z-d)^{-1}(c-w)^{-1}(y-b)C \\ (z-d)^{-1}C & 0\end{array}\right] &, \ if \ s>r+1, \\
        \ \ \ \ \ \ \ \ \ \ +\gamma\left[\begin{array}{cc} 0 & (z-d)^{-1}(y-b) \\ -(z-d)^{-1}(c-w) & 1\end{array}\right] & \end{array}\right.$$
        where $W=A-D-(z-d)(b-y)^{-1}B$. 
        
        \item[(ii)] If $b=y, a\neq x, c\neq w$ and $d\neq z$ then $Z_{rs}$ is equal to:
        $$\left\{\begin{array}{ll} \gamma\left[\begin{array}{cc}(c-w)^{-1}(a-x) & 0 \\ 1 & -(c-w)^{-1}(z-d)\end{array}\right] &, \ if \ s=r+1 \\ 
        &\\
        \left[\begin{array}{cc}(z-d)^{-1}W-(z-d)^{-1}(c-w)^{-1}(a-x)C & (x-a)^{-1}B \\ 0 & (c-w)^{-1}C\end{array}\right] &, \ if \ s>r+1 \\
        \ \ \ \ \ \ \ \ \ \ +\gamma\left[\begin{array}{cc}(c-w)^{-1}(a-x) & 0 \\ 1 & -(c-w)^{-1}(z-d)\end{array}\right]& 
        \end{array}\right.$$
         where $W=A-D-(c-w)(x-a)^{-1}B$.
        \item[(iii)] If $c=w,a\neq x,b\neq y$ and $d\neq z$ then $Z_{rs}$ is equal to:
        $$\left\{\begin{array}{ll}\gamma\left[\begin{array}{cc}-(b-y)^{-1}(x-a) & 1 \\ 0 & -(b-y)^{-1}(z-d)\end{array}\right] &, \ if \ s=r+1\\
        &\\
        \left[\begin{array}{cc}(b-y)^{-1}B & 0 \\ (z-d)^{-1}C & (a-x)^{-1}W-(b-y)^{-1}(a-x)^{-1}(z-d)B\end{array}\right]&, \ if \ s>r+1 \\ 
        \ \ \ \ \ \ \ \ \ \ +\gamma\left[\begin{array}{cc}-(b-y)^{-1}(x-a) & 1 \\ 0 & -(b-y)^{-1}(z-d)\end{array}\right]& \end{array}\right.$$
        where $W=A-D-(y-b)(z-d)^{-1}C$.
        \item[(iv)] If $d=z,a\neq x, b \neq y$ and $c\neq w$ then $Z_{rs}$ is equal to
        $$\left\{\begin{array}{ll}\gamma \left[\begin{array}{cc}1 & -(b-y)(x-a)^{-1} \\ -(x-a)^{-1}(c-w) &0\end{array}\right]&, \ if \ s=r+1 \\ 
        &\\
        \left[\begin{array}{cc}0 & (x-a)^{-1}B \\ (y-b)^{-1}W-(x-a)^{-1}(y-b)^{-1}(c-w)B & (c-w)^{-1}C\end{array}\right] &, \ if \ s>r+1, \\ 
        \ \ \ \ \ \ \ \ \ \ +\gamma \left[\begin{array}{cc}1 & -(b-y)(x-a)^{-1} \\ -(x-a)^{-1}(c-w) &0\end{array}\right] & \end{array}\right.$$
        where $W=A-D-(a-x)(c-w)^{-1}C$. 
        \item[(v)] If $a=x,b=y,c\neq w$ and $d\neq z$ then $Z_{rs}$ is equal to
        $$\!\left\{\!\begin{array}{ll}\!\gamma\!\left[\begin{array}{cc}-(z-d)^{-1}(c-w) & 1 \\
        \!-\!(z\!-\!d)^{-1}(c\!-\!w)\beta^{-1}\![z(z\!-\!d)\!^{-1}\!(c\!-\!w)\!+\!w] & \beta^{-1}[z(z-d)^{-1}(c-w)+w]\end{array}\right],& \\
        \hspace{10cm}\!\!\! ,\! if\! \ \!s\!=\!r\!+\!1\!&\\
        &\\
        \!\left[\!\begin{array}{cc}(z-d)^{-1}A & 0 \\ (z-d)^{-1}C-(z-d)^{-1}(c-w)\beta^{-1}W & \beta^{-1}W\end{array}\!\right]\!+\! & \\
        \!\gamma\!\left[\!\begin{array}{cc}-(z-d)^{-1}(c-w) & 1 \\
        \!-\!(z\!-\!d)^{-1}(c\!-\!w)\beta^{-1}[z(z\!-\!d)^{-1}(c\!-\!w)\!+\!w] & \beta^{-1}[z(z-d)^{-1}(c\!-\!w)\!+\!w]\end{array}\!\right]\!,\!&\\
        \hspace{10cm},\! if\! \ \!s\!>\!r\!+\!1\!&
        \end{array}\!\right.$$
        where $W=A+b(z-d)^{-1}C-z(z-d)^{-1}A$ and $\beta=b(z-d)^{-1}(c-w)+a$.
        \item[(vi)] If $a=x, c=w, b \neq y$ and $d \neq z$ then $Z_{rs}$ is equal to:
        \item[$\cdot$] If $w\neq 0$
        $$\left\{\begin{array}{ll}\gamma\left[\begin{array}{cc}0 & w^{-1}a \\ 0 & 1\end{array}\right]&, \ if\ s=r+1 \\
        &\\
        \left[\begin{array}{cc}(b-y)^{-1}B & -w^{-1}(A-z(b-y)^{-1}B+b(z-d)^{-1}C) \\ (z-d)^{-1}C & 0 \end{array}\right] &, \ if \ s>r+1 \\ \ \ \   \ \ \ \ \ \ \ \ \ \ \ \ \ \ \ \ \ \ \ +\gamma\left[\begin{array}{cc}0 & w^{-1}a \\ 0 & 1\end{array}\right] & \end{array}\right.$$
        \item[$\cdot$] If $a\neq 0$ then
        $$\left\{\begin{array}{ll}\gamma\left[\begin{array}{cc}0 & 1 \\ 0 & -a^{-1}w\end{array}\right] &, \ if \ s=r+1 \\
        &\\
        \left[\begin{array}{cc}(b-y)^{-1}B & 0 \\ (z-d)^{-1}C & -a^{-1}(A-z(b-y)^{-1}B+b(z-d)^{-1}C) \end{array}\right]&, \ if \ s>r+1 \\
         \ \ \ \ \ \ \ \ \ \ +\gamma\left[\begin{array}{cc}0 & 1 \\ 0 & -a^{-1}w\end{array}\right] &
        \end{array}\right.$$
        
        \item[(vii)] If $a=x, d=z, b \neq y$ and $c\neq w$ then $Z_{rs}$ is equal to
        $$\left\{\!\begin{array}{ll}\gamma\left[\begin{array}{cc}0 & -(c-w)^{-1}(y-b)\\ 1 & 0\end{array}\right]\!\!\! &\!\!\!,  if\! \ \!s\!=\!r\!+\!1\! \\
        &\\
        \left[\begin{array}{cc}(b-y)^{-1}B & (c-w)^{-1}(A-D) \\ 0 & (c-w)^{-1}C\end{array}\right]+\gamma\left[\begin{array}{cc}0 & -(c-w)^{-1}(y-b)\\ 1 & 0\end{array}\right]\!\!\!&\!\!\!,  if \!\ \!s\!>\!r\!+\!1\!
        \end{array}\!\right.$$
        
        \item[(viii)] If $b=y, c=w, d \neq z$ and $a\neq x$ then $Z_{rs}$ is equal to
        $$\left\{\!\begin{array}{ll}\gamma\left[\begin{array}{cc}-(z-d)^{-1}(a-x) & 0 \\ 0 & 1\end{array}\right]\!\!\!&\!\!\!, if\! \ \!s\!=\!r\!+\!1\! \\
        &\\
        \left[\begin{array}{cc}(z-d)^{-1}(A-D) & (x-a)^{-1}B \\ (z-d)^{-1}C & 0 \end{array}\right]+\gamma\left[\begin{array}{cc}-(z-d)^{-1}(a-x) & 0 \\ 0 & 1\end{array}\right]\!\!\!&\!\!\!,  if \!\ \!s\!>\!r\!+\!1\! \end{array}\!\right.$$
        \item[(ix)] If $b=y, d=z, c \neq w$ and $a\neq x$ then $Z_{rs}$ is equal to
        \item[$\cdot$] If $z\neq 0$:
        $$\left\{\begin{array}{ll}\gamma\left[\begin{array}{cc}z^{-1}b & 0 \\ 1 & 0\end{array}\right] &,\ if \ s=r+1 \\
        &\\
        \left[\begin{array}{cc}z^{-1}(A+w(x-a)^{-1}B-a(c-w)^{-1}D) & (x-a)^{-1}B \\ 0 & (c-w)^{-1}D\end{array}\right]&, \ if \ s>r+1 \\ 
         \ \ \ \ \ \ \ \ \ \ +\gamma\left[\begin{array}{cc}z^{-1}b & 0 \\ 1 & 0\end{array}\right]&\end{array}\right.$$
        \item[$\cdot$] If $b \neq 0$:
        $$\left\{\begin{array}{ll}\gamma\left[\begin{array}{cc}1 & 0 \\ b^{-1}z& 0\end{array}\right] &, \ if \ s=r+1 \\
        &\\
        \left[\begin{array}{cc}0 & (x-a)^{-1}B \\ -b^{-1}(A+w(x-a)^{-1}B-a(c-w)^{-1}D)& (c-w)^{-1}D\end{array}\right]&, \ if \ s>r+1 \\ \ \ \ \ \   \ \ \ \ \ \ \ \ \ \ +\gamma\left[\begin{array}{cc}1 & 0 \\ b^{-1}z& 0\end{array}\right]&\end{array}\right.$$
        
        \item[(x)] If $c=w, d = z, b\neq y$ and $a\neq x$ then $Z_{rs}$ is equal to
        $$\left\{\!\begin{array}{ll}\!\gamma\!\left[\begin{array}{cc}(b\!-\!y)^{-1}\beta^{-1}(x\!-\!a)(b(y\!-\!b)^{-1}(a\!-\!x)\!+\!w) & -\beta^{-1}(b(y\!-\!b)^{-1}(a\!-\!x)\!+\!w) \\ -(y\!-\!b)^{-1}(a\!-\!x) & 1\end{array}\right]& \\
         \hspace{10cm}, if \!\ \!s\!=\!r\!+\!1\!&\\
         &\\
        \left[\begin{array}{cc}(b-y)^{-1}(B-(x-a)^{-1}\beta^{-1}W) & \beta^{-1}W \\ (y-b)^{-1}(A-D) & 0\end{array}\right]&  \\
          +\!\gamma\!\left[\!\begin{array}{cc}(b\!-\!y)^{-1}\beta^{-1}(x\!-\!a)(b(y\!-\!b)^{-1}(a\!-\!x)\!+\!w) & -\beta^{-1}(b(y\!-\!b)^{-1}(a\!-\!x)\!+\!w) \\ -(y\!-\!b)^{-1}(a\!-\!x) & 1\end{array}\!\right] &
        \\
        \hspace{10cm} ,if \ s\!>\!r\!+\!1\!&
        \end{array}\!\right.$$
   where $W=A-z(b-y)^{-1}B+b(y-b)^{-1}(A-D)$ and $\beta=-z(b-y)^{-1}(x-a)-w$.
        \item[(xi)] If all entries of $Z_{rr}$ and $Z_{ss}$ are different, then $Z_{rs}$ is equal to
        $$\left\{\begin{array}{ll}\gamma\left[\begin{array}{cc}(x-a)(z-d)^{-1} & - (z-d)^{-1}(b-y)
    \\
    -(z-d)^{-1}(c-w) & 1\end{array}\right]\!\!\! &\!\!\!,  if \! \ \!s\!=\!r\!+\!1\! \\
    &\\
    \left[\begin{array}{cc}(b-y)^{-1}[B+(x-a)(z-d)^{-1}\theta^{-1}W] & -(z-d)^{-1}\theta^{-1}W \\ (z-d)^{-1}C & 0\end{array}\right]+\!\!\!&\!\!\!,  if \ \!s\!>\!r\!+\!1\! \\
    \ \ \ \ \ \ \ \ + \gamma\left[\begin{array}{cc}(x-a)(z-d)^{-1} & - (z-d)^{-1}(b-y)
    \\
    -(z-d)^{-1}(c-w) & 1\end{array}\right]&
    \end{array}\right.$$
     where $W=(z-d)[(b-y)(A-D)-(z-d)B]-(y-c)C$ and, in all of the above cases, $\gamma\in \mathbb{Z}_p$ can be an arbitrary element.
    \end{itemize}
\end{itemize}
\end{theorem}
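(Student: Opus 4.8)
The plan is to reproduce, at the level of $2\times2$ blocks, the strategy used for Theorem~\ref{teo1}. Writing $M=(Z_{rs})$ with $Z_{rs}=0$ for $r>s$ and comparing the $(r,s)$ block of $M\widetilde M$ with that of $I_n$, one obtains $Z_{rr}\widetilde Z_{rr}=I_2$ when $r=s$, the relation $Z_{rr}\widetilde Z_{r,r+1}+Z_{r,r+1}\widetilde Z_{r+1,r+1}=0$ when $s=r+1$, and
\begin{equation*}
Z_{rr}\widetilde Z_{rs}+Z_{rs}\widetilde Z_{ss}=-\sum_{t=r+1}^{s-1}Z_{rt}\widetilde Z_{ts}=:N
\end{equation*}
when $s>r+1$. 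The diagonal relation is exactly the hypothesis of Lemma~\ref{lemma1}, so it is equivalent to $\det Z_{rr}=1$; this yields statement~1.) and also records that $\widetilde Z_{rr}=Z_{rr}^{-1}$ on the diagonal. The remaining task is to solve, block by block, the $s=r+1$ equation (right-hand side $0$) and the general equation (right-hand side $N$), exactly as was done coordinatewise for the Gaussian case.

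For the off-diagonal blocks the key observation is that, since $\widetilde X=\operatorname{adj}(X)=(\operatorname{tr}X)I_2-X$ depends $\mathbb Z_p$-linearly on $X$, the map $X\mapsto Z_{rr}\widetilde X+X\widetilde Z_{ss}$ is linear; hence each block equation becomes a system of four linear equations over $\mathbb Z_p$ in the four unknown entries of $Z_{rs}$, with coefficients assembled from the entries $a,b,c,d$ of $Z_{rr}$ and $x,y,w,z$ of $Z_{ss}$ and right-hand side $0$ or the entries $A,B,C,D$ of $N$. The bulk of the proof is then the explicit Gaussian elimination of this $4\times4$ system, and the long case distinction in the statement is nothing but bookkeeping of which coefficient is available as the leading pivot: the four branches of~a.) (on $a\neq0$, $b\neq0$, $c\neq0$, $d\neq0$) cover the totally degenerate situation $Z_{rr}=Z_{ss}$; the eleven branches of~b.), with their sub-cases on the vanishing of single entries, cover $Z_{rr}\neq Z_{ss}$ organized by how many of $a=x$, $b=y$, $c=w$, $d=z$ hold. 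In each branch one solves for three (respectively one) of the entries of $Z_{rs}$ in terms of $N$ and of the remaining entries, substitutes back, renames the surviving free entries $\beta_1,\beta_2,\beta_3$ (respectively $\gamma$), and reads off the displayed matrices as a basis for the solution set; the auxiliary scalars $W$, $\theta$, $\beta$ appearing in the statement are precisely the pivot expressions produced by the elimination, and $\det Z_{rr}=1$ (together with the running hypotheses of each case) is what guarantees they are the nonzero quantities one divides by.

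The step I expect to be the main obstacle is the \emph{compatibility} of the system: after eliminating, one must check that the residual equation(s) are automatically satisfied, so that the solution space has exactly the stated dimension — three when $Z_{rr}=Z_{ss}$ and one otherwise. This is the analogue of the point in Theorem~\ref{teo1}(vi) where ``the second side of the last identity is null'', and it again rests on the normalizations $\det Z_{rr}=\det Z_{ss}=1$. Concretely, the polarization identity $U\widetilde V+V\widetilde U=(\det(U+V)-\det U-\det V)\,I_2$ shows at once that when $Z_{rr}=Z_{ss}=Z$ the map $X\mapsto Z\widetilde X+X\widetilde Z=(\det(Z+X)-\det Z-\det X)\,I_2$ takes values in $\mathbb Z_p\,I_2$, so its kernel is $3$-dimensional, while in the remaining cases the same determinant relations should force the single leftover linear condition produced by the elimination to collapse identically, leaving a $1$-parameter family. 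Verifying this collapse uniformly across all sub-cases of~b.), and checking that the displayed formulas indeed reproduce and exhaust the solutions, is laborious but is routine linear algebra over the field $\mathbb Z_p$; carrying it out establishes both implications of the theorem, and the dimension count ($3$ free parameters per super-block inside a group of equal diagonal blocks, $1$ otherwise) is exactly what feeds the exponent of $p$ in Theorem~\ref{th3}.
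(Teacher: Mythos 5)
Your proposal follows essentially the same route as the paper: reduce each off-diagonal block equation $Z_{rr}\widetilde Z_{rs}+Z_{rs}\widetilde Z_{ss}=N$ to a $4\times4$ linear system over $\mathbb Z_p$ (the paper writes it out explicitly as systems \eqref{system1} and \eqref{system2}), then perform Gaussian elimination with a case analysis on which entry serves as pivot, using $\det Z_{rr}=\det Z_{ss}=1$ to justify the divisions and the compatibility of the residual equations. Your polarization identity $U\widetilde V+V\widetilde U=(\det(U+V)-\det U-\det V)I_2$ is a cleaner way to see the $3$-dimensional kernel in case a.) than the paper's direct reduction to the single equation $dz_1-cz_2-bz_3+az_4=0$, but it does not change the substance of the argument.
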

\begin{proof} 
Whitout lost of generality, denote by 
$$Z_{rr}=\left[\begin{array}{cc} a & b \\ c & d\end{array}\right],\ \  Z_{rs}=\left[\begin{array}{cc} z_1 & z_2 \\ z_3 & z_4\end{array}\right] \ \ \ and \ \  \ Z_{ss}=\left[\begin{array}{cc} x & y \\ w & z\end{array}\right].$$So we have that
$$\widetilde{Z}_{rr}=\left[\begin{array}{cc} d & -b \\ -c & a\end{array}\right], \ \ \widetilde{Z}_{rs}=\left[\begin{array}{cc} z_4 & - z_2 \\ -z_3 & z_1\end{array}\right] \ \ \ and \ \ \ \widetilde{Z}_{ss}=\left[\begin{array}{cc} z &- y \\ - w & x\end{array}\right].$$

\begin{itemize} 
    \item [1.)] Follows inmediately from Lemma \ref{lemma1}.
    \item[2.)] If $s=r+1$, in this case we obtain 
    $$Z_{rr}\widetilde{Z}_{r,r+1}+Z_{r,r+1}\widetilde{Z}_{r+1,r+1}=0.$$
    So, we obtain the system
    \begin{equation}\label{system1}
    \left[\begin{array}{cccc}z & -w & -b & a \\ b-y & x-a & 0 & 0 \\ 0 & 0 & z-d & c-w \\ d & -c & -y & x\end{array}\right]\left[\begin{array}{c}z_1 \\ z_2 \\ z_3 \\ z_4\end{array}\right]= \left[\begin{array}{c}0 \\ 0 \\ 0 \\ 0 \end{array}\right].
    \end{equation}
    In the case that $s>r+1$  we obtain 
    $$Z_{rr}\widetilde{Z}_{rs}+Z_{r,r+1}\widetilde{Z}_{r+1,s}+\cdots+Z_{r,s-1}\widetilde{Z}_{s-1,s}+Z_{rs}\widetilde{Z}_{ss}=0$$
    or
    \begin{equation}\label{equation2}
    Z_{rr}\widetilde{Z}_{rs}+Z_{rs}\widetilde{Z}_{ss}=-\sum^{s-1}_{t=r+1}Z_{rt}\widetilde{Z}_{ts}.
    \end{equation}
    By hipothese, denote by $\displaystyle-\sum^{s-1}_{t=r+1}Z_{rt}\widetilde{Z}_{ts}=\left[\begin{array}{cc}A & B \\ C & D\end{array}\right],$
    then from equation (\ref{equation2}) we obtain the following system
    \begin{equation}\label{system2}
    \left[\begin{array}{cccc}z & -w & -b & a \\ b-y & x-a & 0 & 0 \\ 0 & 0 & z-d & c-w \\ d & -c & -y & x\end{array}\right]\left[\begin{array}{c}z_1 \\ z_2 \\ z_3 \\ z_4\end{array}\right]= \left[\begin{array}{c}A \\ B \\ C \\ D \end{array}\right]. 
    \end{equation}
    \item[2a.)] If $Z_{rr}=Z_{r+1,r+1}$ then from the system (\ref{system1}) we have  the equation
    $$dz_1-cz_2-bz_3+az_4=0.$$
    How $\det(Z_{rr})=1$ then some entry $a,b,c$ or $d$ is not zero, from this, for example, if $d\neq 0$ then we can write
    $$z_1=d^{-1}(cz_2+bz_3-az_4).$$
    From this
    $$Z_{r,r+1}=z_2\left[\begin{array}{cc}d^{-1}c & 1 \\ 0 & 0\end{array}\right]+z_3\left[\begin{array}{cc}d^{-1}b & 0 \\ 1 & 0\end{array}\right]+z_4\left[\begin{array}{cc}-d^{-1}a & 0 \\ 0 & 1\end{array}\right],$$
    with arbitrary $z_2,z_3,z_4 \in \mathbb{Z}_p$. 
     If $Z_{rr}=Z_{ss}$ with $s>r+1$ then, from the system (\ref{system2}), necessarily $B=C=0$ and $A=D$. Here we obtain the equation
     $$dz_1-cz_2-bz_3+az_4=A.$$
     Similarly to the above item, we can consider $d\neq 0$, then $$z_1=d^{-1}(A+cz_2+bz_3-az_4).$$
     Thus
     $$Z_{rs}=z_2\left[\begin{array}{cc}d^{-1}c& 1 \\ 0 & 0\end{array}\right]+z_3\left[\begin{array}{cc}d^{-1}b& 0 \\ 1 & 0\end{array}\right]+z_4\left[\begin{array}{cc}-d^{-1}a& 0 \\ 0 & 1\end{array}\right]+\left[\begin{array}{cc}d^{-1}A& 0 \\ 0 & 0\end{array}\right],$$
     where $z_2,z_3,z_4\in \mathbb{Z}_p$ are arbitrary elements.
\item[2b.)] In the case that $Z_{rr}\neq Z_{ss}$ we study the following cases:
        \item [(i)] If $a=x,b\neq y, c\neq w$ and $d\neq z$. Then from the  equation (\ref{equation2}) we obtain that $(b-y)z_1=B$ and from here $z_1=(b-y)^{-1}B.$ From the first and the fourth equation of the system (\ref{system2}) we obtain
        $$(z-d)z_1+(c-w)z_2+(y-b)z_3=A-D$$
        or
        \begin{equation}\label{equation5}
            (c-w)z_2+(y-b)z_3=A_1 
        \end{equation}
        with $A_1=A-D-(z-d)(b-y)^{-1}B.$
        From equation (\ref{equation5}) and the third equation of the system (\ref{system2}) we obtain the new following system
        $$\left\{\begin{array}{cl} (z-d)(c-w)z_2+(z-d)(y-b)z_3=&(z-d)A_1 \\
        (y-b)(z-d)z_3+(y-b)(c-w)z_4=&(y-b)C.\end{array}\right.,$$
        and substraying  both equation, we obtain
        $$(z-d)(c-w)z_2-(y-b)(c-w)z_4=(z-d)A_1-(y-b)C.$$
        then
        $$z_2=(z-d)^{-1}(c-w)^{-1}\left\{(z-d)A_1-(y-b)C+(y-b)(c-w)z_4\right\}.$$
        Also, from the third equation of the system (\ref{system2})  we obtain
        $$z_3=(z-d)^{-1}\left\{ C-(c-w)z_4\right\}.$$
        So
        $$
        \begin{array}{rcl}
        Z_{rs}&=&z_4\left[\begin{array}{cc} 0 & (z-d)^{-1}(y-b) \\ -(z-d)^{-1}(c-w) & 1\end{array}\right]+\\
        &&\\
        &&+\left[\begin{array}{cc} (b-y)^{-1}B & (c-w)^{-1}A_1-(z-d)^{-1}(c-w)^{-1}(y-b)C \\ (z-d)^{-1}C & 0\end{array}\right]
        \end{array}
        $$
        for arbitrary $z_4 \in \mathbb{Z}_p$. The cases (ii),(iii) and (iv) solves similarly.
        
        \item[(v)] In this case we have that $a=x,b=y,c\neq w$ and $d\neq z$, then from system (\ref{system2}), we have $B=0$. From first and fourth equations into the same system we obtain
        \begin{equation}z_1=(z-d)^{-1}[A-(c-w)z_2],\label{z1}\end{equation}
        and from second equation of the same system we obtain
        \begin{equation}z_3=(z-d)^{-1}[C-(c-w)z_4].\label{z3}\end{equation}
        Substituting the above relations into first equation of the system (\ref{system2}) we obtain
        \begin{equation}-[z(z-d)^{-1}(c-w)+w]z_2+[b(z-d)^{-1}(c-w)+a]z_4=E,\label{equabeta}\end{equation}
        with $E=A+b(z-d)^{-1}C-z(z-d)^{-1}A.$ Now, analyzing the term $\beta=b(z-d)^{-1}(c-w)+a$. Suppose that $\beta=0$ then
        $$b(z-d)^{-1}(c-w)+a=0$$
        or
        \begin{equation}\label{equaresul} az-ad+bc-bw=0.\end{equation}
        By item (1) of our proof, we obtain that $\det(Z_{rr})=ad-bc=1$. Then substituting in the equation $(\ref{equaresul})$ we have
        \begin{equation}az-bw=1,\label{equanew}\end{equation}        
        but, by hipotesse $Z_{rr}\neq Z_{ss}$ then $Z_{rr}\widetilde{Z}_{ss}=\left[\begin{array}{cc}az-bw & * \\ * & *\end{array}\right]\neq I_2$, in particular $az-bw\neq 1$ and this contraries the equation (\ref{equanew}). So $\beta\neq 0$ and by the equation $(\ref{equabeta})$ we obtain that
        \begin{equation}z_4=\beta^{-1}[E+[z(z-d)^{-1}(c-w)+w]z_2].\label{z4}\end{equation}
        From the relations (\ref{z1}), (\ref{z3}) and (\ref{z4}) we finally obtains that $Z_{rs}$ is equal to
        $$\left[\begin{array}{cc}(z-d)^{-1}A & 0 \\ (z-d)^{-1}C-(z-d)^{-1}(c-w)\beta^{-1}E & \beta^{-1}E\end{array}\right]+$$ 
        $$\ \ \ \ +z_2\left[\!\begin{array}{cc}-(z\!-\!d)^{-1}(c\!-\!w) & 1 \\
        -(z\!-\!d)^{-1}(c\!-\!w)\beta^{-1}[z(z\!-\!d)^{-1}(c\!-\!w)\!+\!w] & \beta^{-1}[z(z\!-\!d)^{-1}(c\!-\!w)\!+\!w]\end{array}\!\right],$$
        where $E=A+b(z-d)^{-1}C-z(z-d)^{-1}A$ and $\beta=b(z-d)^{-1}(c-w)+a$. So, $Z_{rs}$ depending only one variable.
        \item[(vi)] In this case, from the second equation of the system (\ref{system2}) we obtain
        $$z_1=(b-y)^{-1}B,$$ and from the thrid equation
        $$z_3=(z-d)^{-1}C,$$
        and substuting in the first equation
        $$zz_1-wz_2-bz_3+az_4=A.$$
        If $w\neq 0$, we obtain 
        $$z_2=-w^{-1}(A-zz_1+bz_3-az_4),$$
        and from here, $Z_{rs}$ is equal to:
        $$\left[\begin{array}{cc}(b-y)^{-1}B & -w^{-1}(A-z(b-y)^{-1}B+b(z-d)^{-1}C) \\ (z-d)^{-1}C & 0 \end{array}\right]+z_4\left[\begin{array}{cc}0 & w^{-1}a \\ 0 & 1\end{array}\right],$$
        and if $a\neq 0$, we obtain
        $$z_4=-a^{-1}(A-zz_1wz_2+bz_3),$$
        and from here, $Z_{rs}$ is equal to
        $$ \left[\begin{array}{cc}(b-y)^{-1}B & 0 \\ (z-d)^{-1}C & -a^{-1}(A-z(b-y)^{-1}B+b(z-d)^{-1}C) \end{array}\right]+z_2\left[\begin{array}{cc}0 & 1 \\ 0 & -a^{-1}w\end{array}\right].$$
        \item[(vii)] If $a=x,d=z,b\neq y $ and $c\neq w$ then, from the system we have immediately that
        \begin{eqnarray}
        z_1&=&(b-y)^{-1}B,\label{az1}\\ 
        z_4&=&(c-w)^{-1}C,\label{az4}\\
        z_2&=&(c-w)^{-1}[A-D-(y-b)z_3]. \label{az2}\end{eqnarray}
        So $Z_{rs}$ is equal to:
        $$\left[\begin{array}{cc}(b-y)^{-1}B & (c-w)^{-1}(A-D) \\ 0 & (c-w)^{-1}C\end{array}\right]+z_3\left[\begin{array}{cc}0 & -(c-w)^{-1}(y-b)\\ 1 & 0\end{array}\right]$$
        where $z_3 \in \mathbb{Z}_p$ and here, $Z_{rs}$ depending only one variable. 
        \item[(viii)] Solves similarly that (vii).
        \item[(ix)] Solves similarly that (vi).
        \item[(x)] Solves similarly that item (v).
        \item[(xi)] In this case $Z_{rr}\neq Z_{ss}$, $a\neq x, b\neq y, c\neq w$ and $d\neq z$. 
    
    From first and fourth equation of the system (\ref{system2}) we obtain
    \begin{equation}(z-d)z_1+(c-w)z_2+(y-b)z_3+(a-x)z_4=A-D\label{equation6}  \end{equation}
    \begin{equation}(b-y)z_1+(x-a)z_2 =  B \label{equation7} \end{equation}
    \begin{equation}(z-d)z_3+(c-w)z_4= C\label{equation8} \end{equation}
    From the equations (\ref{equation6}) and (\ref{equation7}) we have
    \begin{equation}
        [(c\!-\!w)(b\!-\!y)-(z\!-\!d)(x\!-\!a)]z_2+(b\!-\!y)(y\!-\!b)z_3+(b\!-\!y)(a\!-\!x)z_4=(b\!-\!y)(A\!-\!D)-(z\!-\!d)B\label{equation9}
    \end{equation}
    and from equations (\ref{equation9}) and (\ref{equation8}) we have
    \begin{equation}
        (z-d)[(c-w)(b-y)-(z-d)(x-a)]z_2+[(a-x)(b-y)(z-d)-(y-b)(b-y)(c-w)]z_4=E \label{equation10}
    \end{equation}
    where $E=(z-d)[(b-y)(A-D)-(z-d)B]-(y-b)C$,
    or, from the last equation we simplify and obtain
    \begin{equation}
        [(x-a)(z-d)-(c-w)(b-y)]\cdot[(z-d)z_2+(b-y)z_4]=-E \label{equationA}
    \end{equation}
    
    \begin{claim}
    $\theta=(x-a)(z-d)-(c-w)(b-y)\neq 0$
    \end{claim}
    Notice that
    $\theta=\det\left(\left[\begin{array}{cc} x & y \\ w & z\end{array}\right]+\left[\begin{array}{cc}-a & -b \\ -c & - d\end{array}\right]\right)=\det(Z_{ss}+(-Z_{rr}))$ and use the following formula for $2\times 2$ matrices:
    $$\det(A+B)=\det A+\det B + \det A\cdot Trace(A^{-1}B).$$
    and observing that
    $$\begin{array}{ll}
         Trace(Z_{ss}^{-1}(-Z_{rr}))&=Trace(\left[\begin{array}{cc}z & -y \\ -w & x\end{array}\right]\left[\begin{array}{cc}- a & -b \\ -c & -d\end{array}\right])  \\
         &= Trace\left(\left[\begin{array}{cc}-za+yc & * \\ * &  bw-xd\end{array}\right]\right) \\
         & =(yc-az)+(bw-xd).
    \end{array}
    $$
    Substitying in the above formula
    $$\theta=2+(yc-az)+(bw-xd)$$
    How $Z_{rr}\neq Z_{ss}$ then $Z_{rr}Z_{ss}^{-1}=Z_{rr}\widetilde{Z}_{ss}\neq I_2$ and for this we obtain that $xd-cy \neq 1$ and $az-bw\neq 1$ and for this we conclude that $\theta \neq 0.$
    So, follows from equation (\ref{equationA}) that 
    $$z_2=(z-d)^{-1}[-\theta^{-1}E-(b-y)z_4].$$
    So
    $$z_1=(b-y)^{-1}\left[B-(x-a)(z-d)^{-1}[-\theta^{-1}E-(b-y)z_4]\right],$$
    and $$z_3=(z-d)^{-1}[C-(c-w)z_4].$$
    So, $Z_{rs}$ is equal to:
    $$ \ \ \ \left[\begin{array}{cc}(b-y)^{-1}[B+(x-a)(z-d)^{-1}\theta^{-1}E] & -(z-d)^{-1}\theta^{-1}E \\ (z-d)^{-1}C & 0\end{array}\right]+$$
    $$\ \ \ \ \ \ \ \ \ \ \ \ \ \ \ \ \ \ \ \ \ \ \ \  +z_4\left[\begin{array}{cc}(x-a)(z-d)^{-1} & - (z-d)^{-1}(b-y)
    \\
    -(z-d)^{-1}(c-w) & 1\end{array}\right]  $$ 
    with $z_4\in \mathbb{Z}_p$ an arbitrary element.
    \end{itemize}
    \end{proof}
\begin{remark}
Using the Theorem \ref{teo4} and appliying the isomorphims inverse $\varphi^{-1}$ we can construct coninvolution matrices over $T_{n}(\mathbb{Z}[i,j,k])$ considering each case of the theorem. 
Describe this is too long and we leave it to the reader 
\end{remark}
And, with this we can compute the number of  Coninvolutions in $T_n(\mathbb{Z}_p[i,j,k])$:
\begin{proof}[Proof of Theorem \ref{th3}]
By the Lemma \ref{lemma1} we know that there are $(p^2-1)p$ of solutions of $z\bar{z}=1$, then we enumerate all solutions and denote $q_1,q_2,\cdots, q_s$ these solutions. Denote by $n_1,n_2,\cdots, n_s$ the number of times that $q_1,q_2,\cdots,q_s$ appears in the diagonal respectively, such that $n_1+n_2+\cdots+ n_s=n$ and $n_i\geq 0$ for all $i=1,2,\cdots,s$. Observe that the value $n_j=0$ means that the solution $q_j$ does not appears in the diagonal of matrix. Then, by the Theorem \ref{teo4} we conclude that, if $q_r$ and $q_s$ appears into the diagonal $n_r$ and $n_s$ times, respectively, then we have that count:
\begin{itemize}
    \item  $p^{3\frac{n_r(n_r-1)}{2}}\times p^{3\frac{n_s(n_s-1)}{2}}$, this because there are $n_r$ diagonals that are equals and similarly there are $n_s$ equal entries in the diagonal.
    \item $p^{n_r\cdot n_s}$ because there are $n_r$ and $n_s$ different entries in the diagonal. 
\end{itemize}
And the combinatory number means the choose such that we can put the solutions $q_1,q_2,\cdots,q_s$ into the diagonal of a matrix. With this we conclude that $\mathcal{CI}(n,\mathbb{Z}_p[i,j,k])$ is equal to $$\sum^{n}_{\substack{n_1+n_2+\cdots+n_s=n \\ n_1\geq 0, n_2\geq, \cdots, n_s\geq 0 }}\binom{n}{n_1n_2\cdots n_s}  p^{\displaystyle\sum_{1\leq i<j \leq s}n_in_j}\cdot p^{3\displaystyle\sum_{i=1 \ \wedge n_i>1}^s \frac{1}{2}(n_i(n_i-1))}.$$
Notice that $\sum_{1\leq i < j \leq s} n_in_j= \frac{1}{2}\left\{(\sum^{s}_{i=1}n_i)^2-\sum^s_{i=1}n_i^2\right\}= \frac{1}{2}\left\{n^2-\sum^s_{i=1}n_i^2\right\},$
and using the Slowik's formula (see \cite{Slowik2}) we can simplify the above formula for  $\mathcal{CI}(n,\mathbb{Z}_p[i,j,k])$. 
\end{proof}
\section{Tables}

\begin{table}[H]
\caption{Number of coninvolutions in the group $T_n(\mathbb{Z}_p[i])$}
\centering
\begin{tabular}{c|c|c|c|c|c}\hline\hline
$p$ & $|U(\mathbb{Z}_p[i])|$&$n=2$ &  $n=3$ & $n=4$ & $n=5$ \\ 
\hline
2 &2& 8&  64& 1024 &32768 \\  
3 &8& 192& 13824 & 2985984  &$32768\times 3^{10}$ \\ 
4 &8&256 &32768 & 16777216 &$32768\times 4^{10}$ \\
5 & 16&1280 & 512000 & $65536\times 5^{6}$  &$1048576\times 5^{10}$\\
6 & 16&1536 &884736 &  $65536\times 6^{6}$& $1048576\times 6^{10}$\\
7 & 48 &4816128 & 37933056 & $5308416\times 7^{6}$ & $48^{5}\times 7^{10}$\\
8 & 32&8192&16777216  &$1048576\times 8^{6}$  & $32^{5}\times 8^{10}$\\
9 & 72&46656& 272097792 &  $26873856\times 9^{6}$&  $72^{5}\times 9^{10}$\\
10& 32&19240& 32768000 & $1048576\times 10^{6}$ &  $32^{5}\times 10^{10}$\\
11& 120&158400 & $120^3\times 11^3$ & $120^{4}\times 11^{6}$ & $120^{5}\times 11^{10}$\\[1ex]
\hline
\end{tabular}
\label{}
\end{table}

\begin{table}[H]
\caption{Number of coninvolutions in the group $T_n(\mathbb{Z}_p[i,j,k])$}
\centering
\begin{tabular}{c|c|c|c}\hline\hline
$p$ &    $n=3$ & $n=4$ & $n=5$ \\ 
\hline
3 &  730944 & 1935197568 & 96505626807936 \\  
5 &  55921200 & 242942845440 & 5540326592171520\\ 
7 & 1097237568 & 10865680662144 & 348938732224113024\\
11 & 63229077120& 2297513874543360 &252342301214307575040\\
13 &  284363059344& 16960852265255808 & 3047262840713675665536\\
17 &  3184303946688 & 423042785745341184 & 168892742231459264058624\\
19 &  8670723975360 & 1607003347848750720 & 894581852305833432708480\\
23 & 48451556497344 & 15918297951624613248 & 15699903765051877402475136 \\
29& 390682469205840 & 257213522072770327680 & 508193153533449590679553920\\
[1ex]

\hline
\end{tabular}
\label{}
\end{table}

\end{document}